\newcommand{\N}{\mathbb{N}}                   
\newcommand{\R}{\mathbb{R}}                   
\newcommand{\A}{\mathscr{A}_a}                
\newcommand{\AR}{\mathscr{A{\!}R}}            
\newcommand{\Zar}{\mathrm{Zar}}
\newcommand{\Na}{\mathcal{N}}                 
\newcommand{\Reg}{\mathrm{Reg}}
\newcommand{\Sing}{\mathrm{Sing}}
\newcommand{\I}{\mathcal{I}}
\theoremstyle{plain}
\newtheorem{theorem}{Theorem}
\newtheorem{proposition}{Proposition}
\newtheorem{lemma}{Lemma}
\newtheorem{corollary}{Corollary}
\theoremstyle{definition}
\newtheorem{remark}{Remark}
\newtheorem{problem}{Problem}
\numberwithin{equation}{section}
\begin{document}

\title{Extensions of arc-analytic functions}

\author{Janusz Adamus}
\address{Department of Mathematics, The University of Western Ontario, London, Ontario, Canada N6A 5B7}
\email{jadamus@uwo.ca}
\author{Hadi Seyedinejad}
\address{Department of Mathematics, The University of Western Ontario, London, Ontario, Canada N6A 5B7}
\email{sseyedin@uwo.ca}
\thanks{J. Adamus's research was partially supported by the Natural Sciences and Engineering Research Council of Canada}

\subjclass[2010]{14P10, 14P20, 14P99}
\keywords{arc-analytic functions, semialgebraic geometry, arc-symmetric sets, Nash functions}

\begin{abstract}
We prove that every arc-analytic semialgebraic function on an arc-symmetric set $X$ in $\R^n$ admits an arc-analytic semialgebraic extension to the whole $\R^n$.
\end{abstract}
\maketitle


\section{Introduction}
\label{sec:intro}

Arc-analytic functions play an important role in modern real algebraic and analytic geometry (see, e.g., \cite{KuPa1} and the references therein). They are, however, hardly known outside the specialist circles, which is perhaps partly due to their rather surprising, if not pathological, behaviour in the general analytic setting (see \cite{BMP}). In the algebraic setting, on the other hand, arc-analytic functions form a very nice family, as our main result will hopefully contribute to attesting to.

Let us recall that a function $f:X\to\R$ is called \emph{arc-analytic} when $f\circ\gamma$ is an analytic function for every real analytic arc $\gamma:(-1,1)\to X$. Typically, in the literature, $X$ is assumed to be a smooth real algebraic or analytic variety, or a semialgebraic set.
\medskip

In this article, we are interested in semialgebraic arc-analytic functions in the setting in which they were originally introduced by Kurdyka \cite{Ku}, that is, on arc-symmetric semialgebraic sets. Recall that a \emph{semialgebraic} set in $\R^n$ is one that can be written as a finite union of sets of the form $\{x\in\R^n:p(x)=0,q_1(x) >0,\dots,q_r(x)>0\}$, where $r\in\N$ and $p,q_1,\dots,q_r\in\R[x_1,\dots,x_n]$. A semialgebraic set $X\subset\R^n$ is called \emph{arc-symmetric} if, for every analytic arc $\gamma:(-1,1)\to\R^n$ with $\gamma((-1,0))\subset X$, we have $\gamma((-1,1))\subset X$.
 A function $f:X\to\R$ is a \emph{semialgebraic function} when its graph is a semialgebraic subset of $\R^{n+1}$. Every arc-analytic semialgebraic function on an arc-symmetric set is continuous in the Euclidean topology (\cite[Prop.\,5.1]{Ku}).

By a fundamental theorem \cite[Thm.\,1.4]{Ku}, the arc-symmetric semialgebraic sets are precisely the closed sets of a certain noetherian topology on $\R^n$. (A topology is called \emph{noetherian} when every descending sequence of its closed sets is stationary.) Following \cite{Ku}, we will call it the \emph{$\AR$ topology}, and the arc-symmetric semialgebraic sets will henceforth be called \emph{$\AR$-closed sets}.

Given an $\AR$-closed set $X$ in $\R^n$, we denote by $\A(X)$ the ring of arc-analytic semialgebraic functions on $X$.
The elements of $\A(X)$ play the role of `regular functions' in $\AR$ geometry. Indeed, it is not difficult to see (\cite[Prop.\,5.1]{Ku}) that the zero locus of every arc-analytic semialgebraic function $f:X\to\R$ is $\AR$-closed. Recently, it was also shown (\cite[\S\,1, Thm.\,1]{AS}) that every $\AR$-closed set may be realized as the zero locus of an arc-analytic function. Therefore, the $\AR$ topology is, in fact, the one defined by arc-analytic semialgebraic functions.
\medskip

In \cite{AS}, we conjectured that every arc-analytic semialgebraic function on an $\AR$-closed set $X$ in $\R^n$ is a restriction of an element of $\A(\R^n)$. Theorem~\ref{thm:main} gives an affirmative answer to this conjecture.

If $X$ is an $\AR$-closed sets in $\R^n$, we denote by $\I(X)$ the ideal in $\A(\R^n)$ of the functions that vanish on $X$.

\begin{theorem}
\label{thm:main}
Let $X$ be an $\AR$-closed set in $\R^n$, and let $f:X\to\R$ be an arc-analytic semialgebraic function. Then, there exists an arc-analytic semialgebraic $F:\R^n\to\R$ such that $F|_X=f$. In other words,
\[
\A(X)\simeq\A(\R^n)/\I(X)
\]
as $\R$-algebras.
\end{theorem}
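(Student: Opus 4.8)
The plan is to reduce the global extension problem to a local one and then patch. First I would note that, since $f$ is a semialgebraic function on the $\AR$-closed set $X\subset\R^n$, its graph $\Gamma_f\subset\R^{n+1}$ is a semialgebraic set whose closure in the $\AR$ topology, $\overline{\Gamma_f}^{\AR}$, projects onto $X$; the key preliminary observation is that arc-analyticity of $f$ forces this $\AR$-closure to still be (essentially) a graph over $X$, because any vertical arc in $\overline{\Gamma_f}^{\AR}$ would have to come from an arc in $X$ along which $f$ takes two values, contradicting that $f\circ\gamma$ is analytic (in particular single-valued and continuous, by \cite[Prop.\,5.1]{Ku}). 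So the real content is to extend $f$ past $X$ to all of $\R^n$ while preserving arc-analyticity and semialgebraicity.

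The natural strategy is to use resolution of singularities together with the characterization of arc-analytic semialgebraic functions via blowings-up. Recall the theorem of Bierstone--Milman--Parusiński type (as used in the Nash/arc-analytic literature): a semialgebraic function is arc-analytic if and only if it becomes Nash (or real-analytic) after composing with a finite sequence of blowings-up with smooth centers. So I would: (1) take an $\AR$-closed set $X$ and, after a finite composition of blowings-up $\sigma:M\to\R^n$ with smooth centers, arrange that the strict transform $\widetilde{X}$ of $X$ is smooth (or at least that $f\circ\sigma|_{\widetilde X}$ extends Nash-analytically to a neighborhood in $M$); (2) extend the pulled-back function from $\widetilde X$ to $M$ using a tubular-neighborhood / Nash retraction argument, which is available because $\widetilde X$ is now a smooth Nash submanifold; (3) push the extension back down via $\sigma$, using that $\sigma$ is an isomorphism over the complement of a lower-dimensional set and that arc-analyticity descends (a function $G$ on $\R^n$ is arc-analytic semialgebraic if $G\circ\sigma$ is). One then checks that the resulting $F$ on $\R^n$ restricts to $f$ on $X$: this holds off the blown-up locus by construction, and on the blown-up locus by continuity of arc-analytic semialgebraic functions.

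An alternative, possibly cleaner route avoids global resolution and instead argues by noetherian induction on the $\AR$ topology. Since the $\AR$ topology is noetherian, I could attempt: extend $f$ from $X$ to a slightly larger $\AR$-closed set, or stratify $X=\Reg(X)\sqcup\Sing(X)$ in the $\AR$ sense, extend over the smooth open stratum $\Reg(X)$ by a Nash tubular neighborhood, and then deal with the lower-dimensional $\AR$-closed set $\Sing(X)\cup(\text{locus where the extension fails})$ by the inductive hypothesis — finally gluing the two extensions along a partition-of-unity-type argument adapted to the Nash/arc-analytic category. The gluing is delicate because a smooth semialgebraic partition of unity need not preserve arc-analyticity, so one likely needs to multiply by suitable powers of equations defining the lower-dimensional set (as in \cite{AS}) to control the transition.

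The main obstacle, in either approach, is precisely this patching/transition step in the arc-analytic category: unlike in the $C^\infty$ or Nash-submanifold settings, one cannot freely use partitions of unity, and one must instead engineer the extension so that along every analytic arc hitting the bad locus the two local formulas agree to infinite order (or agree outright). Concretely, after writing $X$ as a zero set of some $g\in\A(\R^n)$ (possible by \cite[\S\,1, Thm.\,1]{AS}), the task becomes: given a local extension $F_0$ valid near a smooth piece and a local extension $F_1$ near the singular locus, produce $F = F_1 + g^N\cdot(\text{correction})$ or an analogous combination that is genuinely arc-analytic globally; verifying arc-analyticity amounts to an arc-by-arc check using the valuation-theoretic description of arcs in blown-up spaces. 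I expect this to be where the bulk of the technical work lies, with resolution of singularities and the Nash tubular neighborhood theorem playing a supporting, more routine role.
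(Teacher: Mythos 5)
Your proposal correctly identifies several of the right ingredients (resolution of singularities, the Bierstone--Milman theorem converting arc-analytic functions into Nash functions after blowings-up, Nash tubular neighbourhoods, and dimensional/noetherian induction), and you correctly diagnose the patching problem as the crux. But the proposal stops short of the mechanism that actually resolves that problem, and the mechanism you guess at (multiplying by powers of a defining equation $g$ of $X$) is not the one that works here. There are two concrete gaps.

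First, in your ``push-down'' step you want to define $F$ on $\R^n$ by $F = G\circ\sigma^{-1}$ for a Nash function $G$ on the blown-up space $M$. This is not well-defined unless $G$ is constant on every fibre of $\sigma$, in particular constant on the exceptional divisor; and for $F$ to be arc-analytic across the centre, $G$ should in fact vanish on the exceptional locus. Your proposal gives no reason $G$ should satisfy this. The paper makes it true by a normalization that you do not perform: it sets
\[
Z:=\Sing(\overline{X}^\Zar)\cup\;\overline{S(f)\cup\Reg_{<k}(X)}^{\Zar},
\]
observes $\dim(Z\cap X)<\dim X$, uses the inductive hypothesis to extend $f|_{Z\cap X}$ to some $g\in\A(\R^n)$, and replaces $f$ by $f-g|_X$. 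After this subtraction, $f$ vanishes on $Z\cap X$, so it can be extended by $0$ to all of $\overline{X}^{\Zar}$, and the subsequent Nash extension $G$ upstairs automatically vanishes on the exceptional locus. This additive reduction to the vanishing case is the real ``gluing'' step; it is not a partition of unity, nor a $g^N\cdot(\text{correction})$ trick.

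Second, even granting the reduction, two further inputs are needed that are absent from your sketch. (i) To know that extending $f$ by $0$ to $\overline{X}^\Zar$ is still arc-analytic, one needs to control the boundary $X\cap\overline{\overline{X}^{\Zar}\setminus X}$; the paper proves (Lemma~\ref{lem:complement}) that this boundary sits inside $\Sing(\overline{X}^{\Zar})\cup\overline{\Reg_{<k}(X)}\subset Z$, so that any analytic arc in $\overline{X}^\Zar$ either stays in $X$ or meets $X$ only in $Z$, where $f=0$. Without this lemma, the zero-extension need not be arc-analytic. (ii) After embedded desingularization, the relevant set upstairs is not a single smooth Nash submanifold but a finite union of Nash submanifolds (the strict transform components plus the exceptional hypersurfaces) that simultaneously have only normal crossings. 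A tubular-neighbourhood retraction handles a single smooth component, but to assemble the component-wise Nash extensions into a single Nash function on a neighbourhood one needs the theorem of Baro--Fernando--Ruiz that $\Na(E)=\!\!{~}^c\!\Na(E)$ for Nash sets $E$ with monomial singularities. This theorem is precisely what makes the ``partition-of-unity'' worry you raise go away in the Nash category, and it does not appear in your proposal.

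In short: your outline has the right scaffolding but is missing the key lemma on $X\cap\overline{\overline{X}^\Zar\setminus X}$, the inductive subtraction that reduces to $f$ vanishing on the bad locus (which is what makes the descent through the blowings-up well-defined), and the BFR normal-crossings/c-Nash ingredient that substitutes for a partition of unity. As written, the push-down is not well-defined and the gluing is unresolved.
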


\begin{remark}
\label{rem:regulous}
The above theorem seems particularly interesting in the context of continuous rational functions.
Following \cite{KKK}, we will call $f:X\to\R$ a \emph{continuous rational function} when $f$ is continuous (in the Euclidean topology) and there exist a Zariski open dense subset $Y$ in the Zariski closure $\overline{X}^\Zar$ and a regular function $F:Y\to\R$ such that $f|_{X\cap Y}=F|_{X\cap Y}$. 
Continuous rational functions have been extensively studied recently (see, e.g., \cite{Kuch}, \cite{KN}, \cite{FHMM}, \cite{KKK}).

It follows from the proof of \cite[Thm.\,1.12]{KKK} (which works also in the $\AR$ setting) that every continuous rational function on an $\AR$-closed set $X\subset\R^n$ is an element of $\A(X)$, and hence admits an arc-analytic semialgebraic extension to $\R^n$.
However, in general, a continuous rational function on $X$ cannot be extended to a continuous rational function on $\R^n$, even if $X$ is Zariski closed (see \cite[Ex.\,2]{KN}). To overcome this problem, Koll{\'a}r and Nowak introduced the notion of a \emph{hereditarily rational function}, that is, a continuous function on an algebraic set which remains rational after restriction to an arbitrary algebraic subset (see \cite{KN} for details). The main result of \cite{KN} asserts that a function $f:Z\to\R$ on an algebraic set $Z\subset\R^n$ is hereditarily rational if and only if $f$ admits a continuous rational extension to $\R^n$.
\end{remark}
\medskip

We shall prove Theorem~\ref{thm:main} in Section~\ref{sec:proof}. We show some immediate corollaries of our main result in Section~\ref{sec:corollaries}. For the reader's convenience, in Section~\ref{sec:prelim}, we recall basic notions and tools used in this note.


\section{Preliminaries}
\label{sec:prelim}

\subsection{$\AR$-closed sets}

First, we shall recall several properties of $\AR$-closed sets that will be used throughout the paper. For details and proofs we refer the reader to \cite{Ku}.

The class of $\AR$-closed sets includes, in particular, the algebraic sets as well as the Nash sets (see below). The $\AR$ topology is strictly finer than the Zariski topology on $\R^n$ (see, e.g., \cite[Ex.\,1.2]{Ku}). Moreover, it follows from the semialgebraic Curve Selection Lemma that $\AR$-closed sets are closed in the Euclidean topology on $\R^n$ (see \cite[Rem.\,1.3]{Ku}).

An $\AR$-closed set $X$ is called \emph{$\AR$-irreducible} if it cannot be written as a union of two proper $\AR$-closed subsets. It follows from noetherianity of the $\AR$ topology (\cite[Prop.\,2.2]{Ku}) that every $\AR$-closed set admits a unique decomposition $X=X_1\cup\dots\cup X_r$ into $\AR$-irreducible sets satisfying $X_i\not\subset\bigcup_{j\neq i}X_j$ for each $i=1,\dots,r$. The sets $X_1,\dots,X_r$ are called the \emph{$\AR$-components} of $X$.

Noetherianity of the $\AR$-topology implies as well that an arbitrary family of $\AR$-closed sets has a well defined intersection. In particular, one can define an $\AR$-closure of a set $E$ in $\R^n$ as the intersection of all $\AR$-closed sets in $\R^n$ which contain $E$.

For a semialgebraic set $E$ in $\R^n$, let $\overline{E}^{\Zar}$ denote the Zariski closure of $E$, that is, the smallest real-algebraic subset of $\R^n$ containing $E$. Similarly, let $\overline{E}^{\AR}$ denote the $\AR$-closure of $E$ in $\R^n$. Consider the following three kinds of dimension of $E$:
\begin{itemize}
\item the geometric dimension $\dim_{\mathrm{g}}\!E$, defined as the maximum dimension of a real-analytic submanifold of (an open subset of) $\R^n$ contained in $E$,
\item the algebraic dimension $\dim_a\!E$, defined as $\dim\overline{E}^{\Zar}$,
\item the $\AR$ topological (or Krull) dimension $\dim_\mathrm{K}\!E$, defined as the maximum length $l$ of a chain $X_0\subsetneq X_1\subsetneq\dots\subsetneq X_l\subset\overline{E}^{\AR}$, where $X_0,\dots,X_l$ are $\AR$-irreducible.
\end{itemize}
It is well known that $\dim_{\mathrm{g}}\!E=\dim_a\!E$ (see, e.g., \cite[Sec.\,2.8]{BCR}). By \cite[Prop.\,2.11]{Ku}, we also have $\dim_a\!E=\dim_\mathrm{K}\!E$.
We shall denote this common dimension simply as $\dim{E}$. By convention, $\dim\varnothing=-1$.

\subsection{Blowings-up and desingularization}

An essential tool in the proof of Theorem~\ref{thm:main} is the blowing-up of $\R^n$ at a Nash subset. Recall that a subset $Z$ of a semialgebraic open $U\subset\R^n$ is called \emph{Nash} if it is the zero locus of a Nash function $f:U \to \R$. A function $f:U\to\R$ is called a \emph{Nash function} if it is an analytic algebraic function on $U$, that is, a real-analytic function such that there exists a non-zero polynomial $P\in\R[x,t]$ with $P(x,f(x))=0$, for every $x \in U$. We denote the ring of all Nash functions on $U$  by $\Na(U)$. We refer the reader to \cite[Ch.\,8]{BCR} for details on Nash sets and mappings.

Let $Z$ be a Nash subset of $\R^n$. Consider the ideal $\I(Z)$ in $\Na(\R^n)$ of all Nash functions on $\R^n$ vanishing on $Z$. By noetherianity of $\Na(\R^n)$ (see, e.g., \cite[Thm.\,8.7.18]{BCR}), there are $f_1,\dots,f_r \in \Na(\R^n)$ such that $\I(Z)=(f_1,\dots,f_r)$. Set
\[
\widetilde{R} \coloneqq \{(x,[u_1,\dots,u_r])\in\R^n\times\mathbb{RP}^{r-1} : \ u_if_j(x)=u_jf_i(x) \mathrm{\ for\ all\ }i,j=1,\dots,r\}\,.
\]
The restriction $\sigma:\widetilde{R}\to\R^n$ to $\widetilde{R}$ of the canonical projection $\R^n\times\mathbb{RP}^{r-1}\to\R^n$ is the \emph{blowing-up of $\R^n$ at (the centre) $Z$}. One can verify that $\widetilde{R}$ is independent of the choice of generators $f_1,\dots,f_r$ of $\I(Z)$.
Since a real projective space is an affine algebraic set (see, e.g., \cite[Thm.\,3.4.4]{BCR}), one can  assume that $\widetilde{R}$ is a Nash subset of $\R^N$ for some $N\in\N$. If $X$ is a Nash subset of $\R^n$, then the smallest Nash subset $\widetilde{X}$ of $\widetilde{R}$ containing $\sigma^{-1}(X\setminus Z)$ is called the \emph{strict transform of $X$ (by $\sigma$)}. In this case, if $Z\subset X$, then we may also call $\widetilde{X}$ the \emph{blowing-up of $X$ at $Z$}.
\medskip

For a semialgebraic set $E$ and a natural number $d$, we denote by $\Reg_d(E)$ the semialgebraic set of those points $x\in E$ at which $E_x$ is a germ of a $d$-dimensional analytic manifold. If $\dim{E}=k$, then $\dim(E\setminus\Reg_k(E))<\dim{E}$.

For a real algebraic set $X$, we denote by $\Sing(X)$ the singular locus of $X$ in the sense of \cite[\S\,3.3]{BCR}. Then, $\Sing(X)$ is an algebraic set of dimension strictly less than $\dim{X}$. Note that, in general, we may have $\Sing(X)\supsetneq X\setminus\Reg_k(X)$, where $k=\dim{X}$.

Recall that every algebraic set $X$ in $\R^n$ admits an \emph{embedded desingularization}. That is, there exists a proper mapping $\pi:\widetilde{R}\to\R^n$ which is the composition of a finite sequence of blowings-up with smooth algebraic centres, such that $\pi$ is an isomorphism outside the preimage of the singular locus $\Sing(X)$ of $X$, the strict transform $\widetilde{X}$ of $X$ is smooth, and $\widetilde{X}$ and $\pi^{-1}(\Sing(X))$ simultaneously have only normal crossings. (The latter means that every point of $\widetilde{R}$ admits a (local analytic) coordinate neighbourhood in which $\widetilde{X}$ is a coordinate subspace and each hypersurface $H$ of $\pi^{-1}(\Sing(X))$ is a coordinate hypersurface.)
For details on resolution of singularities we refer the reader to \cite{BM2} or \cite{Hi}.

\subsection{Nash functions on monomial singularities}

Another key component in the proof of Theorem~\ref{thm:main} is the behaviour of Nash functions on the so-called monomial singularities, studied in \cite{BFR}. Let $M\subset\R^n$ be an \emph{affine Nash submanifold}, that is, a semialgebraic set which is a closed real analytic submanifold of an open set in $\R^n$. Let $X\subset M$ and let $\xi\in X$. We say that the germ $X_\xi$ is a \emph{monomial singularity} if there is a neighbourhood $U$ of $\xi$ in $M$ and a Nash diffeomorphism $u:U\to\R^m$, with $u(\xi)=0$, that maps $X\cap U$ onto a union of coordinate subspaces. We say that $X$ is a \emph{set with monomial singularities} if its germ at every point is a monomial singularity (possibly smooth).

Given a semialgebraic subset $E$ of an affine Nash submanifold $M$, a function $f:E\to\R$ is called a \emph{Nash function on $E$} if there exists an open semialgebraic $U$ in $M$, with $E\subset U$, and a Nash function $F\in\Na(U)$ (in the sense defined above) such that $F|_E=f$. The ring of all Nash functions on $E$ will be denoted by $\Na(E)$. If $E\subset M$ is a Nash set, then a function $f:E\to\R$ is called a \emph{c-Nash function} when its restriction to each irreducible component of $E$ is Nash. The ring of c-Nash functions will be denoted by $\!\!{~}^c\!\Na(E)$. Of course, we always have $\Na(E)\subset\!\!{~}^c\!\Na(E)$. By \cite[Thm.\,1.6]{BFR}, if $E\subset M$ is a Nash set with monomial singularities then
\begin{equation}
\label{eq:BFR}
\Na(E)=\!\!{~}^c\!\Na(E)\,.
\end{equation}


\section{Proof of Theorem~\ref{thm:main}}
\label{sec:proof}

For a semialgebraic set $S$ in $\R^n$ and an integer $k$, we will denote by $\Reg_{<k}(S)$ the semialgebraic set of these points $x\in S$ at which $S_x$ is a germ of a manifold of dimension less than $k$.

\begin{lemma}
\label{lem:complement}
If $X$ is an $\AR$-closed set of dimension $k$ in $\R^n$, then
\[
X\cap\overline{\overline{X}^{\Zar}\setminus X} \ \subset \ \Sing(\overline{X}^{\Zar})\cup\overline{\Reg_{<k}(X)}\,.
\]
\end{lemma}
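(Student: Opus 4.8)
The statement is local along $X$, so fix a point $\xi\in X\cap\overline{\overline{X}^{\Zar}\setminus X}$ and suppose $\xi\notin\Sing(\overline{X}^{\Zar})$; the goal is to show $\xi\in\overline{\Reg_{<k}(X)}$. Set $V:=\overline{X}^{\Zar}$. Since $\xi$ is a smooth point of $V$, there is a Euclidean neighbourhood $\Omega$ of $\xi$ in $\R^n$ in which $V$ is a real-analytic submanifold; and since $\dim X=\dim V=:k$ (both equal the algebraic dimension by the equality $\dim_a=\dim_{\mathrm K}$ recalled in the preliminaries), after shrinking $\Omega$ we may assume $V\cap\Omega$ is a connected $k$-dimensional manifold. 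The plan is to argue that $X$ cannot fill up a full neighbourhood of $\xi$ inside this manifold, so that near $\xi$ the set $X$ has lower-dimensional points accumulating at $\xi$.

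The first step is to exploit that $\xi\in\overline{V\setminus X}$: there is a sequence of points $x_j\in(V\setminus X)\cap\Omega$ with $x_j\to\xi$. The second, and main, step is to bring in arc-symmetry. By the semialgebraic Curve Selection Lemma, through $\xi$ (approached from within $V\setminus X$, since $\xi$ lies in its closure and everything is semialgebraic) there is a real-analytic arc $\gamma:(-1,1)\to V\cap\Omega$ with $\gamma(0)=\xi$ and $\gamma((0,\varepsilon))\subset V\setminus X$ for small $\varepsilon>0$. Arc-symmetry of $X$ forbids the complementary half-arc from lying entirely in $X$: if $\gamma((-\varepsilon,0))\subset X$, then arc-symmetry would force $\gamma((-1,1))\subset X$, contradicting $\gamma((0,\varepsilon))\cap X=\varnothing$. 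Hence $\gamma((-\varepsilon,0))\not\subset X$, but we need more — we need that $X$ is \emph{nowhere $k$-dimensional} near $\xi$ in a way that produces points of $\overline{\Reg_{<k}(X)}$ arbitrarily close to $\xi$. The cleanest route: if $\xi\in\Reg_{<k}(X)$ we are done immediately (it lies in its own closure); otherwise $X_\xi$ is $k$-dimensional, so $X$ contains a $k$-dimensional manifold germ at $\xi$, which (being a full-dimensional submanifold of the connected $k$-manifold $V\cap\Omega$) is open in $V\cap\Omega$ near $\xi$ — but then a whole neighbourhood of $\xi$ in $V$ lies in $X$, contradicting $x_j\to\xi$ with $x_j\in V\setminus X$.

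So in fact $\xi\notin\Reg_k(X)$, i.e. $\xi\in X\setminus\Reg_k(X)$. The final step is to upgrade ``$\xi\notin\Reg_k(X)$'' to ``$\xi\in\overline{\Reg_{<k}(X)}$''. Here one uses the standard stratification of the semialgebraic set $X$: write $X$ as a finite disjoint union of connected Nash submanifolds; the one containing $\xi$ has dimension $<k$ (since $\xi\notin\Reg_k(X)$ and $\dim X=k$ forces the top stratum through a $k$-regular point, which $\xi$ is not — more carefully, $X\setminus\Reg_k(X)$ is semialgebraic of dimension $\le k$, and at $\xi$ it is not a $k$-manifold germ, so $\xi$ lies in the closure of a lower-dimensional stratum of $X$, hence in $\overline{\Reg_{<k}(X)}$). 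Concretely: $\xi\in X\setminus\Reg_k(X)$, and $X\setminus\Reg_k(X)$ has a dense subset of points where it is a manifold of dimension $<k$; each such point lies in $\Reg_{<k}(X)$ provided the ambient $X$-germ is still not $k$-dimensional there — which holds on the (dense, open in $X\setminus\Reg_k(X)$) locus where the two notions agree. Taking a sequence of such points converging to $\xi$ gives $\xi\in\overline{\Reg_{<k}(X)}$, completing the proof.

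**Main obstacle.** The delicate point is the last step — controlling the interplay between ``$X_x$ fails to be a $k$-manifold germ'' and ``$X_x$ is genuinely a germ of a manifold of dimension $<k$''. A point may fail to be $k$-regular while also failing to be $(<k)$-regular (e.g. a crossing of two $k$-planes), so one cannot simply say $X\setminus\Reg_k(X)\subset\Reg_{<k}(X)$. The fix is to invoke that $X\setminus\Reg_k(X)$ is itself semialgebraic, hence $\Reg_{<k}(X)$ is dense in it wherever $X$ is locally $<k$-dimensional, together with the fact (recalled before Lemma~\ref{lem:complement}) that $\dim(X\setminus\Reg_k(X))<k$ when $\dim X=k$ — so every point of $X\setminus\Reg_k(X)$, in particular $\xi$, lies in the closure of the locus where $X$ is a manifold of dimension $<k$, i.e. in $\overline{\Reg_{<k}(X)}$.
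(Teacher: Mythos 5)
Your proof has a genuine gap in the final step. Having ruled out $\xi\in\Reg_k(X)$ (correctly, via the sequence $x_j\to\xi$ in $\overline{X}^{\Zar}\setminus X$), you then claim that $\xi\in X\setminus\Reg_k(X)$ together with $\xi\notin\Sing(\overline{X}^{\Zar})$ forces $\xi\in\overline{\Reg_{<k}(X)}$, justifying this by the density of low-dimensional manifold points in the semialgebraic set $X\setminus\Reg_k(X)$. But, as you yourself write, that density argument applies only ``wherever $X$ is locally $<k$-dimensional'' --- and you never verify this near $\xi$. There is a third possibility your case analysis silently drops: $\xi\in\overline{\Reg_k(X)}\setminus\Reg_k(X)$. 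At such a point $X$ is locally $k$-dimensional, $X_\xi$ is not a manifold germ of any dimension, and the manifold points of $X\setminus\Reg_k(X)$ near $\xi$ need not lie in $\Reg_{<k}(X)$, because the full $X$-germ there is crowded by nearby $k$-dimensional sheets of $\Reg_k(X)$. Excluding this case is the real content of the lemma; it requires arc-symmetry, and your final step never invokes the hypothesis $\xi\notin\Sing(\overline{X}^{\Zar})$ at all.

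The paper makes this explicit. Setting $\Sing_k(X):=\overline{\Reg_k(X)}\setminus\Reg_k(X)$, it writes $X=\Reg_k(X)\cup\Sing_k(X)\cup\overline{\Reg_{<k}(X)}$ (the last piece absorbs $X\setminus\overline{\Reg_k(X)}$, which is where your density argument is valid) and then proves $\Sing_k(X)\subset\Sing(\overline{X}^{\Zar})$ by contradiction. If $\xi\in\Sing_k(X)$ were a regular point of $\overline{X}^{\Zar}$, take the connected component $U$ of $\Reg_k(\overline{X}^{\Zar})$ containing $\xi$. Since $\xi\in\overline{\Reg_k(X)}$, there are $k$-regular points of $X$ in $U$ around which $X$ fills an open piece of $U$, so $U\cap X$ has non-empty interior in $U$; on the other hand $U\setminus X\neq\varnothing$, else $X_\xi$ would be a smooth $k$-germ. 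An analytic arc in the connected $k$-manifold $U$ joining an interior point of $U\cap X$ to a point of $U\setminus X$ then meets $X$ on a non-empty open subset of its domain without lying entirely in $X$, contradicting arc-symmetry. Note that this is not the arc you construct: yours passes through $\xi$ with one half in $V\setminus X$, and (as you observe) it only yields $\gamma((-\varepsilon,0))\not\subset X$, which is too weak and which you then abandon. The paper's arc need not pass through $\xi$ at all; it joins a full $X$-neighbourhood inside $U$ to the complement, which is where arc-symmetry actually produces the contradiction.
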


\begin{proof}
Set $\Sing_k(X)\coloneqq\overline{\Reg_k(X)}\setminus\Reg_k(X)$. Then $X$ can be written as a union
\[
X=\Reg_k(X)\cup(\Sing_k(X)\cup\overline{\Reg_{<k}(X)})\,.
\]
It is evident that $\Reg_k(X)\cap\overline{\overline{X}^{\Zar}\setminus X}\ \subset\ \overline{X}^{\Zar}\setminus\Reg_k(\overline{X}^{\Zar})$, and hence
\[
\Reg_k(X)\cap\overline{\overline{X}^{\Zar}\setminus X}\ \subset \ \Sing(\overline{X}^{\Zar})\,.
\]
It thus suffices to show that $\Sing_k(X)\subset\Sing(\overline{X}^{\Zar})$.
Suppose otherwise, and pick $\xi\in\Sing_k(X)\cap\Reg_k(\overline{X}^{\Zar})$. Let $U$ be the connected component of $\Reg_k(\overline{X}^{\Zar})$ that contains $\xi$. Then, $U\cap X$ is a non-empty open subset of $X$. On the other hand, $U\setminus X\neq\varnothing$, for else $X_\xi$ would be a smooth $k$-dimensional germ. Pick any $a\in U\cap X$ and $b\in U\setminus X$, and let $\gamma:(-1,1)\to U$ be an analytic arc in $U$ passing through $a$ and $b$ (which exists, because $U$ is a connected analytic manifold). Then $\gamma^{-1}(X)$ contains a non-empty open subset
of $(-1,1)$, but $\gamma((-1,1))\not\subset X$, which contradicts the arc-symmetry of $X$.
\qed
\end{proof}

\subsubsection*{Proof of Theorem~\ref{thm:main}}
Let $X$ be an $\AR$-closed set in $\R^n$. We argue by induction on dimension of $X$.

If $\dim X=0$, then $X$ is just a finite set and hence an extension $F:\R^n\to\R$ may be even chosen to be polynomial.
Suppose then that $\dim X=k>0$, and every arc-analytic semialgebraic function on every $\AR$-closed set in $\R^n$ of dimension smaller than $k$ admits an arc-analytic semialgebraic extension to the whole $\R^n$.

Given $f\in\A(X)$, let $S(f)$ denote the locus of points $x\in\Reg_k(X)$ such that $f$ is not analytic at $x$. Then, $S(f)$ is semialgebraic and $\dim S(f)\leq k-2$ (see \cite{KuPa}, and cf. \cite[Thm.\,5.2]{Ku}).

Let
\[
Z:=\Sing(\overline{X}^\Zar)\cup\;\overline{S(f)\cup\Reg_{<k}(X)}^{\Zar}\,.
\]
Since taking Zariski closure of a semialgebraic set does not increase the dimension, we have $\dim(Z\cap X)\leq k-1$.
Therefore, by the inductive hypothesis, $f|_{Z\cap X}$ can be extended to an arc-analytic semialgebraic function $g:\R^n\to\R$. By replacing $f$ with $f-g|_X$, we may thus assume that
\begin{equation}
\label{eq:Z}
f|_{Z\cap X}=0.
\end{equation}
We may further extend $f$ to an arc-analytic function on $X\cup Z$, by setting $f|_Z\coloneqq0$, and hence extend it by $0$ to $\overline{X}^{\Zar}$: 
\begin{equation}
\label{eq:Zar}
f|_{\overline{X}^{\Zar}\setminus X}\coloneqq0.
\end{equation}
This extension is arc-analytic. Indeed, by Lemma~\ref{lem:complement}, we have $X\cap\overline{\overline{X}^{\Zar}\setminus X}\subset Z$, which, by the arc-symmetry of $X$, implies that an analytic arc $\gamma$ in $\overline{X}^{\Zar}$ is either entirely contained in $X$ or else it intersects $X$ only at points of $Z$.

Let $\pi:\widetilde{R}\to\R^n$ be an embedded desingularization of $\overline{X}^{\Zar}$\!, and let $\widetilde{X}$ be the strict transform of $\overline{X}^{\Zar}$. By \cite[Thm.\,2.6]{Ku}, there are connected components $E_1,\dots,E_s$ of $\widetilde{X}$, each of dimension $k$, such that $\pi(E_1\cup\ldots\cup E_s)=\overline{\Reg_k(X)}$. Set $E:=E_1\cup\dots\cup E_s$.
By \eqref{eq:Z} and \eqref{eq:Zar}, we have $f\circ\pi|_T\equiv0$ for all other connected components $T$ of $\widetilde{X}$, as well as $f\circ\pi|_H\equiv0$ for every hypersurface $H$ of the exceptional locus $\pi^{-1}(\Sing(\overline{X}^\Zar))$.

By \cite[Thm.\,1.1]{BM1}, there exists a finite composition of blowings-up $\sigma:\check{R}\to\widetilde{R}$ \;(with smooth Nash centres) which converts the arc-analytic semialgebraic function $f\circ\pi|_E$ into a Nash function $f\circ\pi\circ\sigma|_{\check{E}}$, where the Nash manifold $\check{E}$ is the strict transform of $E$ by $\sigma$.
Moreover, by \cite[Thm.\,1.3]{KuPa}, the centres of the blowings-up in $\sigma$ can be chosen so that $\sigma$ is an isomorphism outside the preimage of $S(f\circ\pi)$. Consequently, one can assume that $\pi\circ\sigma$ is an isomorphism outside the preimage of $Z$.

Let $W:=(\pi\circ\sigma)^{-1}(\overline{X}^{\Zar})$. By the above, the singular locus of $W$ is contained in $(\pi\circ\sigma)^{-1}(Z)$.
Let $\tau:\widehat{R}\to\check{R}$ be an embedded desingularization of $W$ (with smooth Nash centres), and let $\widehat{W}$ be the strict transform of $W$. Further, let $\widehat{E}$ be the strict transform of $\check{E}$, and let $\Sigma$ denote the exceptional locus of $\tau$. Since the real projective space is an affine algebraic variety, we may assume that $\widehat{R}\subset\R^N$ for some $N\in\N$. Notice that, by \eqref{eq:Z} and \eqref{eq:Zar}, $f\circ\pi\circ\sigma\circ\tau$ is a continuous function on $\widehat{W}\cup\Sigma$, which vanishes identically on every irreducible component of $\widehat{W}$ which is not contained in $\widehat{E}$, as well as on every irreducible component of $\Sigma$. Since $f\circ\pi\circ\sigma\circ\tau|_{\widehat{E}}$ is Nash, by construction, it follows that $f\circ\pi\circ\sigma\circ\tau$ is Nash when restricted to every (Nash) irreducible component of $\widehat{W}\cup\Sigma$. We will write $\widehat{f}$ for $f\circ\pi\circ\sigma\circ\tau|_{\widehat{W}\cup\Sigma}$, for short.

We claim that $\widehat{f}$ can be extended to a Nash function $\widehat{F}:U\to\R$ on an open semialgebraic neighbourhood $U$ of $\widehat{W}\cup\Sigma$ in $\R^N$. Indeed, the set $\widehat{W}\cup\Sigma$ is a finite union of Nash submanifolds of $\R^N$ which simultaneously have only normal crossings. Therefore, by \eqref{eq:BFR}, $\widehat{f}$ admits a required Nash extension if and only if $\widehat{f}|_T$ can be extended to a Nash function on an open semialgebraic neighbourhood of $T$ in $\R^N$ for every irreducible component $T$ of $\widehat{W}\cup\Sigma$. Let then $T$ be such an irreducible component. Since $T$ is a Nash submanifold of $\R^N$, it has a tubular neighbourhood. That is, there exists an open semialgebraic neighbourhood $U_T$ of $T$ in $\R^N$ with a Nash retraction $\varrho_T:U_T\to T$ (see \cite[Cor.\,8.9.5]{BCR}). We may thus extend $\widehat{f}|_T$ to a Nash function $\widehat{F}_T:U_T\to\R$ by setting $\widehat{F}_T(x):=\widehat{f}(\varrho_T(x))$ for all $x\in U_T$. This proves the existence of $\widehat{F}$.

Now, by the Efroymson extension theorem (see \cite{E}, or \cite[Thm.\,8.9.12]{BCR}), the function $\widehat{F}$ admits a Nash extension to the whole $\R^N$; i.e., there exists $G\in\Na(\R^N)$ such that $G|_U=\widehat{F}$. Then, $G|_H\equiv0$ for every hypersurface $H$ of the exceptional locus of $\tau$, since this is the case for $\widehat{F}$.

Finally, we define the extension $F:\R^n\to\R$ of $f$ as
\[
F(x):=\begin{cases}
(G\circ\tau^{-1}\circ\sigma^{-1}\circ\pi^{-1})(x) & \mathrm{if\ }x\notin Z\\
0 & \mathrm{if\ }x\in Z\ .
\end{cases}
\]
To see that $F$ is arc-analytic, let $\gamma:(-1,1)\to\R^n$ be an analytic arc. Let $\widetilde{\gamma}:(-1,1)\to\widetilde{R}$ be the lifting of $\gamma$ by $\pi$, let $\check{\gamma}:(-1,1)\to\check{R}$ be the lifting of $\widetilde{\gamma}$ by $\sigma$, and let $\widehat{\gamma}:(-1,1)\to\R^N$ be the lifting of $\check{\gamma}$ by $\tau$. We claim that
\begin{equation}
\label{eq:arc}
F\circ\gamma=G\circ\widehat{\gamma}\,,
\end{equation}
which implies that $F\circ\gamma$ is analytic.
Indeed, if $\gamma(t)\notin Z$ for some $t\in(-1,1)$, then \eqref{eq:arc} holds because $(G\circ\tau^{-1}\circ\sigma^{-1}\circ\pi^{-1})(\gamma(t))=(G\circ\tau^{-1}\circ\sigma^{-1})(\widetilde{\gamma}(t))=(G\circ\tau^{-1})(\check{\gamma}(t))=G(\widehat{\gamma}(t))$. If, in turn, $\gamma(t)\in Z$, then $\gamma(t)$ lifts by $\pi\circ\sigma\circ\tau$ either to a point $z$ in $\widehat{W}\setminus\widehat{E}$ or else a point $z$ in the exceptional locus of $\tau$. In either case, $G(z)=0$, by construction, and so $G(\widehat{\gamma}(t))=0=F(\gamma(t))$, as required.
\qed

\begin{remark}
\label{rem:where-sing}
It is evident from the above proof that, in fact, one could choose the extension $F:\R^n\to\R$ to be analytic outside of \,$\Sing(\overline{X}^\Zar)\cup\overline{S(f)\cup\Reg_{<k}(X)}^{\Zar}$ (hence, in particular, outside of $\overline{X}^\Zar$).
\end{remark}

\begin{problem}
\label{prob:1}
It would be interesting to know if the extension $F$ can be chosen so that its non-analyticity locus satisfies $S(F)=S(f)$.
\end{problem}


\section{Some immediate applications}
\label{sec:corollaries}

Arc-analytic semialgebraic functions may be defined and studied on arbitrary semialgebraic sets (see, e.g., \cite{S}). It is thus natural to ask which semialgebraic sets enjoy the extension property from Theorem~\ref{thm:main}. The following result shows that, in fact, the arc-symmetric sets are uniquely characterised by the extension property.

\begin{proposition}
\label{prop:only-AR}
For a semialgebraic set $S$ in $\R^n$, the following conditions are equivalent:
\begin{itemize}
\item[(i)] $S$ is arc-symmetric.
\item[(ii)] Every arc-analytic semialgebraic function on $S$ admits an arc-analytic semialgebraic extension to the whole $\R^n$.
\end{itemize}
\end{proposition}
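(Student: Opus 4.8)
The implication (i)$\Rightarrow$(ii) is immediate from Theorem~\ref{thm:main}, since a semialgebraic set is arc-symmetric exactly when it is $\AR$-closed. For (ii)$\Rightarrow$(i) I plan to argue by contraposition: assuming $S$ is not arc-symmetric, I will exhibit a function $f\in\A(S)$ that admits no arc-analytic semialgebraic extension to $\R^n$. The common device in both cases below is to take for $f$ the restriction to $S$ of a rational function whose polar locus is a quadric \emph{disjoint from $S$} — so that $f$ is in fact Nash on $S$ and therefore lies in $\A(S)$ — but positioned so that the analytic continuation of $f$ along a well-chosen analytic arc is forced to blow up at an \emph{interior} point of the parameter interval; no element of $\A(\R^n)$ can absorb this, because such functions are continuous (\cite[Prop.\,5.1]{Ku}) and compose with analytic arcs to give analytic functions. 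Since arc-symmetric sets are closed in the Euclidean topology (\cite[Rem.\,1.3]{Ku}), ``$S$ not arc-symmetric'' divides into the two cases treated next.

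If $S$ is not Euclidean-closed, I would pick $q$ in the Euclidean closure of $S$ with $q\notin S$ and set $f(x):=\|x-q\|^{-2}$ on $S$. Since $q\in\R^n$, the denominator $\sum_i(x_i-q_i)^2$ is a polynomial, strictly positive on $S$; thus $f$ is Nash on the open semialgebraic set $\R^n\setminus\{q\}\supset S$ and so $f\in\A(S)$. But $f(x_k)\to+\infty$ along any sequence $x_k\in S$ tending to $q$, so $f$ has no continuous — hence no arc-analytic semialgebraic — extension to $\R^n$.

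If $S$ is Euclidean-closed but not arc-symmetric, failure of arc-symmetry supplies an analytic arc $\gamma:(-1,1)\to\R^n$ with $\gamma((-1,0))\subset S$ and $p:=\gamma(t^*)\notin S$ for some $t^*\in[0,1)$. As $S$ is closed, $d:=\mathrm{dist}(p,S)>0$; I would fix $\delta\in(0,d)$ and set $f(x):=(\|x-p\|^2-\delta^2)^{-1}$ on $S$, which is well defined and Nash there because $\|x-p\|\ge d>\delta$ forces the denominator to exceed $d^2-\delta^2>0$. Then I would argue by contradiction: if some $F\in\A(\R^n)$ restricts to $f$ on $S$, put $h(t):=\|\gamma(t)-p\|$ and $t':=\inf\{t\in[0,t^*]:h(t)\le\delta\}$ (attained, since $h(t^*)=0$ and $h$ is continuous). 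Using $\gamma((-1,0))\subset S$ one gets $h>\delta$ on $(-1,0)$, hence $h>\delta$ throughout $(-1,t')$ while $h(t')=\delta$; therefore $t'\in[0,t^*]\subset(-1,1)$, the function $g(t):=(h(t)^2-\delta^2)^{-1}$ is real-analytic on $(-1,t')$, and $g(t)\to+\infty$ as $t\to t'^-$. Since $F\circ\gamma$ is real-analytic on $(-1,1)$ and equals $g$ on $(-1,0)$, the identity principle gives $F\circ\gamma\equiv g$ on $(-1,t')$, so $F\circ\gamma(t)\to+\infty$ as $t\to t'^-$, contradicting continuity of $F\circ\gamma$ at the interior point $t'$. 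This finishes the contrapositive.

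The only delicate step is the one just highlighted in the second case: ensuring that the singularity of the continuation of $f$ along $\gamma$ falls \emph{strictly inside} $(-1,1)$ and not at an endpoint, since otherwise no contradiction arises. This is precisely what the choice $\delta\in(0,\mathrm{dist}(p,S))$ and the definition of $t'$ secure — on the entire left half $(-1,0)$ the arc $\gamma$ still lies in $S$, where $f$ is bounded, so the first parameter at which $h$ drops to $\delta$ lies in $[0,t^*]\subset(-1,1)$. The remaining verifications (that the polar quadric misses $S$, so that $f$ is Nash, hence arc-analytic semialgebraic, on $S$; and the elementary facts about arc-analytic functions on $\R^n$) are routine.
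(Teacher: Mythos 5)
Your proof is correct, and it rests on the same core idea as the paper's: produce an element of $\A(S)$ as the restriction of a rational function with a pole just outside $S$, reached by an arc starting in $S$, so that any arc-analytic extension to $\R^n$ would have to be unbounded (hence non-analytic) along that arc. That said, the paper's argument is more economical: it needs neither your case split nor the $\delta$-offset. Having picked an arc $\gamma$ witnessing the failure of arc-symmetry, the paper simply chooses $a\in\gamma((0,1))\setminus S$ and sets $f(x)=\|x-a\|^{-2}$. This $f$ is defined, semialgebraic, and arc-analytic on $S$ regardless of whether $S$ is Euclidean-closed, because one only needs $a\notin S$ (boundedness, and hence Nashness on a neighbourhood of $S$, are irrelevant here). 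Moreover, if $s$ is the smallest parameter with $\gamma(s)=a$, then automatically $s\in[0,1)$ --- since $\gamma((-1,0))\subset S$ forces $\gamma\neq a$ on $(-1,0)$, while $a\in\gamma((0,1))$ provides a zero in $(0,1)$ --- so the blow-up of $f\circ\gamma$ at $t=s$ is already at an interior point, with no need to retreat by $\delta$. Your closed-$S$ case recovers essentially this, while your non-closed case is handled by a pure continuity argument; both are fine, but the uniform argument subsumes them.
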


\begin{proof}
The implication $\mathrm{(i)}\Rightarrow\mathrm{(ii)}$ is given by Theorem~\ref{thm:main}.
For the converse, let $S$ be a semialgebraic subset of $\R^n$ that is not arc-symmetric. This means that there exists an analytic arc $\gamma:(-1,1)\to\R^n$ such that $\gamma((-1,0))\subset S$ but $\gamma((0,1))\not\subset S$. Pick a point $a=(a_1,\dots,a_n)\in\gamma((0,1))\setminus S$, and define
\[
f(x)=\frac{1}{\sum_{i=1}^n (x_i-a_i)^2}\,,\quad\mathrm{where\ }x=(x_1,\dots,x_n)\in\R^n\,. 
\]
Then $f$ is an arc-analytic function on $S$ that has no extension to an arc-analytic function on $\R^n$. Indeed, given any such extension $F:\R^n\to\R$, we would have $F(\gamma(t))=f(\gamma(t))$ for any $t \in (-1,0)$ and hence for any $t \in (-1,s)$, where $s\in(0,1)$ is the minimum parameter such that $\gamma(s)=a$. But $f(\gamma(t))$ has no left-sided limit at $s$, which means that $F\circ\gamma$ cannot be analytic, and thus $F$ is not arc-analytic.
\qed
\end{proof}

Theorem~\ref{thm:main} implies also an arc-analytic variant of the Urysohn lemma. More precisely, we have the following.

\begin{corollary}
\label{cor:TU}
Let $X$ and $Y$ be disjoint $\AR$-closed sets in $\R^n$. Then, there exists an arc-analytic semialgebraic function $F:\R^n\to\R$ such that $F|_X\equiv0$ and $F|_Y\equiv1$. In particular, there exist disjoint open semialgebraic sets $U$ and $V$ in $\R^n$ such that $X\subset U$ and $Y\subset V$.
\end{corollary}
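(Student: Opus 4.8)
The plan is to deduce Corollary~\ref{cor:TU} directly from Theorem~\ref{thm:main}. The idea is that, since $X$ and $Y$ are disjoint, one can build a single arc-analytic semialgebraic function on $X\cup Y$ that already separates them, and then extend it to all of $\R^n$.

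First I would observe that $X\cup Y$ is $\AR$-closed, being a finite union of $\AR$-closed sets, and define $f\colon X\cup Y\to\R$ by $f|_X\equiv 0$ and $f|_Y\equiv 1$; this is legitimate since $X\cap Y=\varnothing$. Its graph is $(X\times\{0\})\cup(Y\times\{1\})$, a finite union of semialgebraic sets, so $f$ is semialgebraic. The one point that needs an argument is that $f$ is arc-analytic. Let $\gamma\colon(-1,1)\to X\cup Y$ be an analytic arc. Since $\AR$-closed sets are closed in the Euclidean topology, $\gamma^{-1}(X)$ and $\gamma^{-1}(Y)$ are closed subsets of $(-1,1)$; they are disjoint because $X\cap Y=\varnothing$, and their union is all of $(-1,1)$. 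By connectedness of $(-1,1)$, one of them must be empty, so $\gamma$ maps entirely into $X$ or entirely into $Y$; in either case $f\circ\gamma$ is constant, hence analytic. Thus $f\in\A(X\cup Y)$.

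By Theorem~\ref{thm:main}, $f$ extends to some $F\in\A(\R^n)$, so $F|_X\equiv 0$ and $F|_Y\equiv 1$, which is the first assertion. For the second, recall that $F$, being an arc-analytic semialgebraic function on the arc-symmetric set $\R^n$, is continuous in the Euclidean topology (\cite[Prop.\,5.1]{Ku}) and semialgebraic; hence $U\coloneqq\{x\in\R^n: F(x)<1/2\}$ and $V\coloneqq\{x\in\R^n: F(x)>1/2\}$ are disjoint open semialgebraic sets, and $X\subset U$, $Y\subset V$ since $F$ takes the value $0$ on $X$ and $1$ on $Y$.

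I do not expect any genuine obstacle here: the substance of the statement is carried entirely by Theorem~\ref{thm:main}, and the only step requiring a moment's thought is the connectedness observation ensuring that the piecewise-defined $f$ is arc-analytic.
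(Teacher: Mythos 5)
Your proof is correct and follows essentially the same route as the paper: define $f$ on $X\cup Y$ as the indicator of $Y$, invoke Theorem~\ref{thm:main} to extend it, and then take preimages of $(-\infty,1/2)$ and $(1/2,\infty)$. The only difference is that you spell out the connectedness argument for arc-analyticity of $f$, which the paper states without proof; this is a worthwhile clarification but not a different approach.
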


\begin{proof}
Given $X$ and $Y$ as above, the function $f:X\cup Y\to\R$ defined as
\[
f(x)=\begin{cases}0, &x\in X\\ 1, & x\in Y\end{cases}
\]
is arc-analytic semialgebraic, and the set $X\cup Y$ is arc-symmetric. Hence, by Theorem~\ref{thm:main}, $f$ admits an extension $F:\R^n\to\R$ with the required properties.
Since arc-analytic semialgebraic functions are continuous (\cite[Prop.\,5.1]{Ku}), the sets $U:=F^{-1}((-\infty,1/2))$ and $V:=F^{-1}((1/2,\infty))$ are open semialgebraic. Clearly, $U\cap V=\varnothing$, $X\subset U$, and $Y\subset V$.
\qed
\end{proof}

\begin{remark}
\label{rem:no-Nash-sep}
Note that, in general, disjoint arc-symmetric sets cannot be separated by a Nash function. Indeed, consider for instance
\[
X=\{(x,y,z)\in\R^3:\, z(x^2+y^2)=x^3\}\setminus\{(x,y,z)\in\R^3:\, x=y=0,z\neq0\}
\]
and $Y=\{(0,0,1)\}$ in $\R^3$. The set $X$ is $\AR$-closed, but its real analytic closure in $\R^3$ is the irreducible algebraic hypersurface $Z=\{(x,y,z)\in\R^3:\, z(x^2+y^2)=x^3\}$ (see \cite[Ex.\,1.2(1)]{Ku}). It follows that every Nash function $f:\R^3\to\R$ which is identically zero on $X$ must vanish on the whole $Z$ and thus cannot be equal to $1$ on $Y$.

Similarly, it is easy to construct disjoint $\AR$-closed sets that cannot be separated by a continuous rational function (cf. \cite[Ex.\,2.3]{S}).
\end{remark}


\end{document}